\theoremstyle{definition}
\newtheorem{definition}{Definition}
\newtheorem{lemma}{Lemma}
\newtheorem{theorem}{Theorem}
\newtheorem{example}{Example}
\title{An Excision Theorem for Persistent Homology}
\author{Megan Palser}
\date{\today}
\begin{document}

\maketitle

\begin{abstract}
We demonstrate that an excision property holds for persistent homology groups. This property holds for a large class of filtrations, and in fact we show that given any filtration on a larger space, we can extend it to a filtration of two subspaces which guarantees that the excision property holds for the triple. This method also applies to the Mayer-Vietoris sequence in persistent homology introduced by DiFabio and Landi in 2011, extending their results to a much larger class of filtrations. 
\end{abstract}

\section*{Introduction}

There are many benefits to using homology as a topological descriptor in data analysis. For one, homology groups are easily computable using standard matrix methods, unlike, say, homotopy groups. Homology has an intuitive description in terms of `holes', making it simple to describe to a non-specialist. These reasons provide much of the explanation for the success, and almost ubiquity of persistent homology in topological data analysis.
 The idea behind persistent homology is to consider a space at a range of different scales. This gives a nested sequence of subspaces, known as a filtration. We then examine how the homology changes throughout this filtration, looking for features which arise over a large range of scales, or \textit{persist}.

 But homology has a range of other useful properties, which do not transfer to the persistent setting. Most notably, homology is a homotopy invariant, and so spaces which can be continuously deformed into one another are ascribed identical homological descriptors. This fails to be the case with persistent homology. The output depends heavily on the method used to obtain a filtration of the space, and even a slight continuous deformation, such as a rotation, can give different results. As well as being a homotopy invariant, homology also benefits from numerous properties which facilitate its computation, such as a Mayer-Vietoris sequence, long exact sequences for a pair, and excision. In 2011, DiFabio and Landi \cite{MV1} showed that a Mayer-Vietoris sequence in persistent homology groups fails to be exact.  A similar attempt to construct a long exact sequence in persistent homology groups for a pair in \cite{MV2} also failed. On the other hand, it was shown in \cite{MV2} that it is possible to obtain exact sequences, in both cases, if we consider sequences of persistence \textit{modules} as opposed to persistent homology \textit{groups}. 

One thing which has not been explored thus far is an excision theorem for persistent homology. Excision is a property possessed by any homology theory of topological spaces, and allows us to relate the homology of a pair to that of a pair of subspaces. 
Unlike the results from \cite{MV1} and \cite{MV2}, we demonstrate that the excision property holds both on the level of persistence modules, and of persistent homology groups. Relative homology has been used in many applications of persistence (see \cite{coverage} and \cite{trajectory}, for example), and excision would allow us to consider smaller, and possibly simpler pairs of spaces in computations. Moreover, we give a way in which any filtration on a space $X$ can give rise to filtrations of subspaces $A$ and $B$ of $X$, in such a way that the persistent excision property is guaranteed to hold for triple, $(X,A,B)$. The persistent Mayer-Vietoris sequence of \cite{MV1} was there defined only for a specific type of filtration - namely, a sub-level set filtration - and so this method provides another large family of filtrations for which the results of \cite{MV1} hold. In fact the result holds for any filtration on $X$, provided we restrict it to the subspaces $A$ and $B$ in the right way.

We begin with some preliminaries on persistent homology, and review the Mayer-Vietoris sequence, and sequence for a pair as seen in \cite{MV1} and \cite{MV2}, respectively. We then present our excision theorem for persistent homology. Finally, it is well-known \cite{hatcher} that the Mayer-Vietoris sequence can be derived from an excision theorem and a long exact sequence for a pair. We demonstrate that Di Fabio and Landi's \cite{MV1} Mayer-Vietoris sequence can be obtained from our excision theorem and \cite{MV2}'s sequence for a pair.

\section{Preliminaries}

Let $X$ be a topological space, and let $H_{k}(X)$ denote the homology of $X$ in degree $k \in \mathbb{N}$. When persistent homology is used for applications, it is customary to use simplicial homology, which is much more computable than, for example, singular homology. This is largely due to the fact that it can be computed using standard results in linear algebra. See \cite{computing} for an accessible description of these methods. However, in theory, we could use any homology theory defined on $X$. Note that if $\mathbb{K}$ is a field, then the homology $H_{k}(X; \mathbb{K})$, with coefficients in $\mathbb{K}$, is a $\mathbb{K}$-vector space. The $k^{th}$ Betti number of $X$, $\beta_{k}(X)$, is the rank of $H_{k}(X; \mathbb{K})$ as a $\mathbb{K}$-vector space. 

\subsection{Filtrations}

\begin{definition}
Let $X$ be a topological space and $P$ be a partially ordered set.
A \textbf{filtration}, $\{X_{\epsilon_{i}}\}_{\epsilon_{i} \in P}$ of $X$ is a functor from $P$, with the relation $\leq$, to the category of subsets of $X$, with the relation $\subseteq$. That is, to every $\epsilon_{i} \in P$ we associate a subspace $X_{i}$ of $X$, and for every $\epsilon_{i} < \epsilon_{j}$ we have a inclusion $X_{i} \subset X_{j}$. 
\end{definition}

There are many ways in which we can define a filtration of a topological space, however the main example to keep in mind is a filtration by sublevel sets of a real-valued function.

\begin{definition}
Let $f: X \to \mathbb{R}$ be a real-valued function. For $\epsilon_{i} \in \mathbb{R}$, a \textbf{sublevel set} of $f$ is given by, 
$$X_{i} = \{x\in X \hspace{1mm} | \hspace{1mm} f(x) \leq \epsilon_{i} \}.$$
It is important to control the number of \textbf{critical points} of $f$, that is, the points $c \in \mathbb{R}$ at which the homology groups $H_{k}(X_{i}) \ncong H_{k}(X_{j})$ for any $\epsilon_{i} < c < \epsilon_{j}$.
In the following, we take $f$ to be a \textbf{tame} function, meaning that 
$H_{k}(X_{i}) \cong H_{k}(X_{j})$ except for 
finitely many values of $\epsilon_{i}, \epsilon_{j}$. Any point of 
$\mathbb{R}$ which is not a critical point is called \textbf{regular}.

A sublevel set filtration of $X$ by $f$ is given by, 
$$X_{0} \subset X_{1} \subset \cdots \subset X_{n},$$
for some finite set of regular values, $R = \{\epsilon_{i}\}_{i = 0, \ldots, n}$, which interleave the critical points of $f$. That is, for every $\epsilon_{i} < \epsilon_{j} \in R$, there is some critical value $c$ of $f$ for which $\epsilon_{i} < c < \epsilon_{j}$. 
\end{definition}

Often, $f:X \to \mathbb{R}$ is taken to be a height function, measuring the vertical height of points in $X$ from some horizontal plane.

\subsection{Persistence Modules}

\begin{definition}
Let $P$ be a partially ordered set and $\mathbb{K}$ be a field. A persistence module is a functor from $P$ to the category of vector spcaes over $\mathbb{K}$. That is, to each $\epsilon_{i} \in P$, we associate a vector space $M_{i}$, and whenever $\epsilon_{i} \leq \epsilon_{j}$, we have a linear map, $\varphi_{i,j}: M_{i} \to M_{j}$, in such a way that, 
\begin{enumerate}
\item For any $i \in P$, $\varphi_{i,i}: M_{i} \to M_{i}$ is the identity map.
\item For any $i < j < k$, the composite $\varphi_{j,k} \circ \varphi_{i,j}$ is equal to the map $\varphi_{i,k}$.
\end{enumerate}
\end{definition}

\begin{definition}
Given two persistence modules $\mathcal{M} = \{M_{i}, \varphi_{i,j} \}_{i \in P}$ and $\mathcal{N} = \{N_{i}, \psi_{i,j} \}_{i \in P}$, a \textbf{morphism} $f: \mathcal{M} \to \mathcal{N}$ is a family of linear maps $f_{i}: M_{i} \to N_{i}$, such that the following diagram of vector spaces,
\begin{equation} \label{morphism}
\begin{tikzcd}
{\cdots} \arrow[r] & {M_{i}} \arrow[d, "f_{i}"] \arrow[r, "\varphi_{i, j}"] & {M_{j}} \arrow[d, "f_{j}"] \arrow[r, "\varphi_{j,k}"] & {M_{k}} \arrow[r] \arrow[d, "f_{k}"] & {\cdots} \\
{\cdots} \arrow[r] & {N_{i}} \arrow[r, "\psi_{i, j}"] & {N_{j}} \arrow[r, "\psi_{j,k}"] & {N_{k}} \arrow[r] & {\cdots}
\end{tikzcd} 
\end{equation}
commutes. 

An \textbf{isomorphism} of persistence modules $\mathcal{M}$ and $\mathcal{N}$ is a family of isomorphisms $f_{i}:M_{i}~\to~N_{i}$ which commute with $\varphi$ and $\psi$.

\end{definition}

We obtain a persistence module from a filtration,
$$X_{0} \subset X_{1} \subset \cdots \subset X_{n} = X,$$
of $X$ by applying a homology functor, $H_{k}(-, \mathbb{K}),$ to each subspace in the filtration, to obtain,
$$H_{k}(X_{0}) \to H_{k}(X_{1}) \to \cdots \to H_{k}(X),$$
where the maps $H_{k}(X_{i}) \to H_{k}(X_{j})$ are those induced by the inclusions of subspaces, $X_{i}~\hookrightarrow~X_{j}$.

In such a persistence module,
we say that a class $[\gamma] \in H_{k}$ is \textbf{alive} at step $i$ if $[\gamma]$ is a non-zero class in $H_{k}(X_{i})$. Using similar terminology, we say that a class $[\gamma]$ \textbf{born} at step $i \in \mathbb{R}$ if $[\gamma]$ is a non-zero class in $H_{k}(X_{i})$, but was not in $H_{k}(X_{i-1})$. Similarly, $[\gamma]$ is said to \textbf{die} at step $j \in \mathbb{R}$ if $[\gamma]$ is a non-zero class in $H_{k}(X_{j})$, but $[\gamma] = [0]$ in $H_{k}(X_{j+1})$.  The lifespan of a class $[\gamma]$ which is born at step $i$, and dies at step $j$, is then $j - i$. Of course, some classes may appear at some point, $i$, and then survive 
for the duration of the filtration, in which case we can think of this class as having an infinite lifespan. 

\begin{definition}
The $k^{th}$ \textbf{persistent homology groups} of a filtered topological space $X$ are given by,
\begin{equation} H_{k}^{i,j}(X) = \text{im}\big\{ H_{k}(X_{i}) \to H_{k}(X_{j}) \big\}. 
\end{equation}
That is, the group $H_{k}^{i,j}(X)$ contains the homology classes $[\gamma]$ which are born before step $i$ and are still alive at step $j$.  
\end{definition}

In order to visualise the persistent homology of a filtered simplicial complex, we can use either \textbf{barcodes} or \textbf{persistence diagrams}. 
A barcode in degree $k \in \mathbb{N}$ is a multi-set of intervals $[b,d)$, where $b$ is the step in the filtration at which a $k$-dimensional homology class in the filtration of $X$ is born, and $d$ is the step at which it dies. It is a multi-set as there may be multiple classes which have the same values of $b$ and $d$, and so each interval in the barcode has a multiplicity greater or equal to 1. Intuitively, longer bars represent more significant homological features, which survive over a greater range of the parameter, $\epsilon$, while shorter bars are more likely to represent features which arise due to noise, and therefore are less likely to be significant. If we have classes which have an infinite lifespan, then the bars which represent such classes will be of the form $[b, \infty)$.

A persistence diagram represents the same information, but instead of plotting intervals of the form $(b,d)$, we plot the endpoints of these intervals in the extended upper half-plane. Of course, as we must have $b<d$, all of these points will lie above the diagonal. Intuitively, the points in the persistence diagram which represent the longest bars in the barcode, and hence the most significant features, will be those furthest from the diagonal. 

\subsection{Module Structure of Persistent Homology}

As noted in \cite{computing}, a persistence module $\mathcal{M} = \{M_{i}, \varphi_{i,j} \}$, where each $M_{i}$ is a $\mathbb{K}$-vector space, has the structure of a graded $\mathbb{K}[t]$-module, via the map,
$$\alpha(\mathcal{M}) = \bigoplus_{i}M_{i}.$$

For a persistence module of the form,
$$H_{k}(X_{0}) \to H_{k}(X_{1}) \to \cdots \to H_{k}(X),$$ 
corresponding to the homology of a filtered topological space $X$, we can endow this persistence module with a graded $\mathbb{K}$-vector space structure, and refer to this persistence module as,
$$\mathcal{H}_{k}(X) = \bigoplus_{i}H_{k}(X_{i}).$$

If an element of $\mathcal{H}_{k}(X)$ is homogeneous of degree $i$, then this element represents a class that is born at step $i$ in the filtration. The action of $t$ shifts the birth stage of an element - if $\gamma$ is born at step $i$, then $t \cdot \gamma$ is born at step $i+1$, and $t^{n} \cdot \gamma$ is born at step $i+n$. 

\subsection{Relative Homology}

Relative homology relates chains in a space $X$ to those in some subspace, $A \subset X$. Specifically, 
$C_{k}(X, A)$ denotes the chains in $X$ modulo those in $A$. That is,\begin{equation} C_{k}(X,A) = {C_{k}(X)}/{C_{k}(A)}.\end{equation}
Effectively, we ignore any chains in the subspace $A$.  $H_{k}(X,A)$ then denotes the homology of the resulting quotient chain complex, 
\begin{equation}
\cdots \to C_{k+1}(X, A) \xrightarrow{\partial_{p+1}} C_{k}(X,A) \xrightarrow{\partial_{p}} C_{k-1}(X,A) \to \cdots \end{equation}
Clearly for a chain $c \in C_{k}(A)$, we have that $\partial(c) \in C_{k-1}(A)$,
and so the usual boundary map $\partial:C_{k}(X) \to C_{k-1}(X)$ descends to a map on the quotients, $\partial:C_{k}(X,A) \to C_{k-1}(X,A)$. 

For a pair $(X,A)$ consisting of a space $X$ and a subspace $A \subset X$,
the relative homology group $H_{k}(X, A)$ consists of relative cycles $[\gamma] \in H_{k}(X)$ that is, cycles whose boundaries are trivial in the quotient space $X/A$. In other words, $H_{k}(X,A)$ consists of those cycles $[\gamma]$ whose boundaries $\partial(\gamma)$ lie in $A$. 

Clearly there are many reasons why using relative homology is advantageous in theory. Choosing $A$ well can significantly simplify the situation, as chains in a potentially large part of our space become trivial in relative homology. However, it is also the case that there are many instances where the use of relative homology is more suitable for applications than absolute (that is, non-relative) homology. One example can be seen in \cite{coverage}, where the authors present a criterion for the blanket coverage of a domain $\mathcal{D} \subset \mathbb{R}^{n}$ by a finite set of static sensors in terms of relative homology. 
 
  \begin{definition}
 Let $X$ be a triangulable topological space, and let $A$ be a subset of $X$. A \textbf{filtration of the pair $(X,A)$} is a sequence of inclusions of pairs, 
  $$(X_{0}, A_{0}) \subset (X_{1}, A_{1}) \subset \cdots \subset (X,A),$$
 such that $X_{i} \subseteq X_{i+1}$ and $A_{i} \subseteq A_{i+1}$ for all $i$. 
 \end{definition}
 
 Given a filtration,
 $$(X_{0}, A_{0}) \subset (X_{1}, A_{1}) \subset \cdots \subset (X,A),$$
 of the pair $(X,A)$,
 we can apply a relative homology functor, $H_{k}(-, -)$ to obtain a persistence module in relative homology,
 $$H_{k}(X_{0}, A_{0}) \to H_{k}(X_{1}, A_{1}) \to \cdots \to H_{k}(X,A),$$
 
 and so the relative persistent homology groups $H_{k}^{i,j}(X,A)$ are defined analogously to the absolute ones. 
 
 The groups $H_{k}^{i,j}(X,A)$ represent cycles which are born before step $i$, and whose boundary is contained in $A_{i}$, which are still alive at step $j$.

 \section{Comparison to Previous work}
 There has been a range of work into attempting to find properties of persistent homology which are analogous to the axioms for a homology theory. The functorial properties of persistence are well-known and are described in detail in \cite{computing} and \cite{structurestability}. 
 
 We outline some of the other properties which have been explored below.

 \subsection{A Mayer-Vietoris Sequence for Persistent Homology}
 
 The Mayer-Vietoris sequence for a triple $(X,A,B)$ is known to be a powerful tool in computing the homology of a space $X$ from that of a pair of subspaces, $A,B \subset X$. Obtaining a similar sequence in persistent homology would be very valuable, and this was attempted in \cite{MV1}. 
 
 To construct this sequence, we consider filtrations of the spaces $X, A$ and $B$ such that for each subset $X_{i}, A_{i}$ and $B_{i}$ in the respective filtrations of $X, A$ and $B$, we have that,
 $$X_{i} = A_{i} \cup B_{i},$$
 $$A_{i} \cap B_{i} = (A \cap B)_{i}.$$
 An example of such a filtration on $X$ - and the one considered in \cite{MV1} - is a sublevel set filtration, which gives a filtration on the subsets $A, B$ and $A \cap B$ by simply restricting the function to these subsets.
 
 Then for each triple, $(X_{i}, A_{i}, B_{i})$, we have the Mayer-Vietoris sequence,
 
 \begin{equation} \label{mayervietoris}
{\cdots} \rightarrow
 H_{k+1}(X_{i}) \xrightarrow{\partial_{k}^{i}}
 H_{k}((A \cap B)_{i}) \xrightarrow{(\alpha_{k}^{i}, \beta_{k}^{i})}
 H_{k}(A_{i}) \oplus H_{k}(B_{i}) \xrightarrow{\gamma_{k}^{i}}
 H_{k}(X_{i}) \rightarrow
 {\cdots} 
\end{equation}

 The maps $\alpha_{k}^{i} $ and  $ \beta_{k}^{i}$ are induced by the respective inclusions, 
 $$(A \cap B)_{i} \hookrightarrow A_{i}, \text{ and } (A \cap B)_{i} \hookrightarrow B_{i},$$ so that,
$$ (\alpha_{k}^{i}, \beta_{k}^{i})([z]) = ([z], [z]), $$  and 
$\gamma_{k}^{i}([z], [z']) = [z - z']$ is induced by the inclusions, $$A_{i} \hookrightarrow X_{i} \text{ and } B_{i} \hookrightarrow X_{i}.$$
The map $\partial_{k}^{i}$ is the usual boundary map - as $X_{i} = A_{i}^{\circ} \cup B_{i}^{\circ}$, every chain in $X_{i}$ can be expressed as the sum of a chain in $A_{i}$ and a chain in $B_{i}$, whose boundary lies in the intersection, $(A \cap B)_{i} = A_{i} \cap B_{i}$.
 
For each $i<j$, we can construct the following diagram, where the rows are the respective Mayer-Vietoris sequences for the triples $(X_{i}, A_{i}, B_{i})$ and $(X_{j}, A_{j}, B_{j})$:
 \begin{equation} \label{mv}
\begin{tikzcd}
{\cdots} \arrow[r] 
& H_{k+1}(X_{i}) \arrow[r, "\partial_{k}^{i}"] \arrow[d, "h_{k+1}"]
& H_{k}((A \cap B)_{i}) \arrow[r, "{(\alpha_{k}^{i}, \beta_{k}^{i})}"] \arrow[d, "f_{k}"]
& H_{k}(A_{i}) \oplus H_{k}(B_{i}) \arrow[r, "\gamma_{k}^{i}"] \arrow[d, "g_{k}"]
& H_{k}(X_{i}) \arrow[r] \arrow[d, "h_{k}"]
& {\cdots} \\
{\cdots} \arrow[r] 
& H_{k+1}(X_{j}) \arrow[r, "\partial_{k}^{j}"] 
& H_{k}((A \cap B)_{j}) \arrow[r, "{(\alpha_{k}^{j}, \beta_{k}^{j})}"] 
& H_{k}(A_{j}) \oplus H_{k}(B_{j}) \arrow[r, "\gamma_{k}^{j}"] 
& H_{k}(X_{j}) \arrow[r] 
& {\cdots} \end{tikzcd}
\end{equation}

We can restrict the horizontal maps $\partial_{k}^{j}, (\alpha_{k}^{j}, \beta_{k}^{j})$ and $\gamma_{k}^{j}$ to the images of the vertical maps $h_{k+1}, f_{k}$ and $g_{k}$ - that is, to the \textit{persistent} homology groups, and obtain the sequence,

 \begin{equation} \label{mvpersist}
\cdots \rightarrow H_{k+1}^{i,j}(X) \xrightarrow{\partial_{k}} H_{k}^{i,j}(A \cap B) \xrightarrow{(\alpha_{k}, \beta_{k})} H_{k}^{i,j}(A) \oplus H_{k}^{i,j}(B)  \xrightarrow{\gamma_{k}} H_{k}^{i,j}(X) \rightarrow \cdots
\end{equation}
 
 This sequence is not exact, but is a chain complex - that is, the image of each incoming map is contained in the kernel of the next outgoing map, but the opposite inclusions do not hold. 
 
 Despite the lack of exactness, the authors of \cite{MV1} state that the sequence being a chain complex is sufficient for applications. Specifically, the authors us this sequence to study images which have been partly obscured. We let $A$ be the image we hope to identify, and $B$ be an additional part of the image which is partially obscuring $A$. Hence the total image, $X$, is given by $A \cup B$, and the obscured part of image $A$ is $A \cap B$. In this case, it was shown that the points of the persistence diagrams of $A$ and $B$ arise in the persistence diagram of the whole image $X = A \cup B$, or in that of the obscured part, $A \cap B$.  
 
 \subsection{Long Exact Sequences in Persistent Homology}
 
 In a very similar vein to \cite{MV1}, in \cite{MV2} the authors attempt to construct a long exact sequence in homology for a pair $(X,A)$. 
 For a space $X$ and a subspace $A \subset X$, and for each pair of subspaces $(X_{i},A_{i})$ in the respective filtrations on $X$ and $A$, we can construct the long exact sequence, 
 
 \begin{tikzcd}
{\cdots} \arrow[r]
& {H_{k+1}(X_{i}, A_{i})} \arrow[r, "\partial_{k}^{i}"]  
& {H_{k}(A_{i})} \arrow[r, "\iota_{k}^{i}"] 
& {H_{k}(X_{i})} \arrow[r, "\kappa_{k}^{i}"]  
& {H_{k}(X_{i}, A_{i})} \arrow[r] 
& {\cdots} 
\end{tikzcd}

 Again, $\partial$ denotes the boundary map, which takes a relative chain in $C_{k}(X,A)$ to its boundary in $C_{k-1}(A)$. The map $\iota_{K}^{i}$ is induced by the inclusion $A_{i} \hookrightarrow X_{i}$ and and $\kappa_{k}^{i}$ is induced by the quotient map $X_{i} \rightarrow X_{i} / A_{i}$.

 For any $i<j$, we can consider the following diagram, where the rows correspond to the long exact sequences of the pairs $(X_{i}, A_{i})$ and $(X_{j}, A_{j})$,
 
\begin{equation} \label{exact}
\begin{tikzcd}
{\cdots} \arrow[r]
& {H_{k+1}(X_{i}, A_{i})} \arrow[r, "\partial_{k}^{i}"] \arrow[d, "h_{k+1}"] 
& {H_{k}(A_{i})} \arrow[r, "\iota_{k}^{i}"] \arrow[d, "f_{k}"] 
& {H_{k}(X_{i})} \arrow[r, "\kappa_{k}^{i}"] \arrow[d, "g_{k}"] 
& {H_{k}(X_{i}, A_{i})} \arrow[r] \arrow[d, "h_{k}"]
& {\cdots} \\
{\cdots} \arrow[r]
& {H_{k+1}(X_{j}, A_{j})} \arrow[r, "\partial_{k}^{j}"]  
& {H_{k}(A_{j})} \arrow[r, "\iota_{k}^{j}"] 
& {H_{k}(X_{j})} \arrow[r, "\kappa_{k}^{j}"] 
& {H_{k}(X_{j}, A_{j})} \arrow[r] 
& {\cdots} \\
\end{tikzcd}
\end{equation}

If we restrict the maps $\partial_{k}^{j}$, $\iota_{k}^{j}$ and $ \kappa_{k}^{j}$ to the images of the respective vertical maps $h_{k+1}$, $f_{k}$ and $ g_{k}$, then we obtain the sequence, 

\begin{equation} \label{lsph}
    {\cdots} \rightarrow H_{k+1}^{i,j}(X, A) \xrightarrow{\partial_{k}} H_{k}^{i,j}(A)
    \xrightarrow{\iota_{k}} H_{k}^{i,j}(X) \xrightarrow{\kappa_{k}} H_{k}^{i,j}(X, A) \rightarrow \cdots
\end{equation}

where the restrictions of $\partial_{k}^{j}$, $\iota_{k}^{j}$ and $ \kappa_{k}^{j}$, described above, are denoted $\partial_{k}$, $\iota_{k}$ and $ \kappa_{k}$, respectively. 
 
 This sequence, just like the one in \cite{MV1}, is not exact, but again, is a chain complex. It is as yet unknown whether this sequence is sufficient to be of use in applications, but the authors do produce a sequence in homology for a pair which \textit{is} exact. This can be done by considering a sequence in persistence \textit{modules}, as opposed to persistent homology groups. 
 
 We use the direct sum structure of a persistence module, using $\mathcal{H}_{k}(X)$ to denote the direct sum, 
 \begin{equation}\label{directsum}
 \mathcal{H}_{k}(X) =  \bigoplus_{i}H_{k}(X_{i}),\end{equation} 
 and we define $\mathcal{H}_{k}(A)$ and $\mathcal{H}_{k+1}(X,A)$ in a similar way. We construct the sequence of persistence modules,
 \begin{equation} \label{lesph} \cdots \rightarrow \mathcal{H}_{k+1}(X,A) \xrightarrow{\partial} \mathcal{H}_{k}(A) \xrightarrow{\iota} \mathcal{H}_{k}(X) \xrightarrow{\kappa} \mathcal{H}_{k}(X,A) \rightarrow \cdots \end{equation}
  with maps given by, 
  $\partial := (\partial_{k}^{0}, \partial_{k}^{1}, \ldots, \partial_{k}^{n})$, 
   $\iota := (\iota_{k}^{0}, \iota_{k}^{1}, \ldots, \iota_{k}^{n})$, and
  $\kappa := (\kappa_{k}^{0}, \kappa_{k}^{1}, \ldots, \kappa_{k}^{n})$.
  
  Then as $\text{im}(\partial_{k}^{i}) = \text{ker}(\iota_{k}^{i})$ for each $i$, we have that $\text{im}(\partial) = \text{ker}(\iota)$, and 
  similarly we have that $\text{im}(\iota) = \text{ker}(\kappa)$ and $\text{im}(\kappa) = \text{ker}(\partial)$, and so in the setting of 
  persistence modules, we obtain a sequence which is exact. 
 
 Moreover, this same construction can be applied in the case of a Mayer-Vietoris sequence. If instead we consider the sequence of persistence modules, 
 \begin{equation} \cdots \rightarrow \mathcal{H}_{k+1}(X) \xrightarrow{\partial} \mathcal{H}_{k}(A \cap B) \xrightarrow{\alpha} \mathcal{H}_{k}(A) \oplus \mathcal{H}_{k}(B) \xrightarrow{\gamma} \mathcal{H}_{k}(X) \rightarrow \cdots, \end{equation}
 
 which can be defined in a very similar way to (\ref{lesph}). Again, the persistence modules $\mathcal{H}_{k}(X), \mathcal{H}_{k}(A), \mathcal{H}_{k}(B)$ and $ \mathcal{H}_{k}(A \cap B)$ are defined just as in (\ref{directsum}), and the maps are given by 
 $$\partial := (\partial_{k}^{0}, \partial_{k}^{1}, \ldots, \partial_{k}^{n}),$$
   $$(\alpha, \beta) := \big( (\alpha_{k}^{0}, \beta_{k}^{0}), (\alpha_{k}^{1}, \beta_{k}^{1}), \ldots, (\alpha_{k}^{n}, \beta_{k}^{n}) \big),$$ 
   $$\text{ and } \gamma := (\gamma_{k}^{0}, \gamma_{k}^{1}, \ldots, \gamma_{k}^{n}),$$ with 
  $\delta_{k}^{i}$, $\alpha_{k}^{i}$, $\gamma_{k}^{i}$ as in (\ref{mayervietoris}).
  
  Then as is the case of (\ref{lesph}), as we have $\text{im}(\partial_{k}^{i}) = \text{ker}(\alpha_{k}^{i})$ for each $i$, we have that $\text{im}(\partial) = \text{ker}(\alpha)$, and similarly we have that $\text{im}(\alpha) = \text{ker}(\gamma)$ and $\text{im}(\gamma) = \text{ker}(\partial)$, and so this Mayer-Vietoris sequence of persistence \textit{modules} is exact. 
  
  As well as the properties of long exact sequences, and Mayer-Vietoris, persistence has also been adapted to the cohomological setting. In \cite{percothesis}, characteristic classes, cup products, Steenrod squares and Poincar\'{e} duality were all defined for persistent cohomology. 
 
\section{An Excision Theorem for Persistent Homology}

Using relative homology gives insights into the topological properties of a large space modulo a subspace, and this form of homology has another powerful property which allows us to consider yet smaller subspaces.  Using a well-chosen subspace often simplifies the situation, and in the context of applications, being able to perform calculations on more manageable spaces would be extremely advantageous. 

\begin{theorem}
 Suppose $A, B \subset X$ are such that the interiors of $A$ and $B$ cover $X$. The inclusion of the pair $(B, A \cap B) \hookrightarrow (X, A)$ induces an isomorphism, $$H_{k}(B, A \cap B) \rightarrow H_{k}(X, A), $$ for every $k \in \mathbb{N}$  \cite{hatcher}.
\end{theorem}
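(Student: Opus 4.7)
The plan is to prove the classical excision theorem at the chain level, following the standard barycentric subdivision approach. Let $\mathcal{U} = \{A, B\}$ and write $C_{k}^{\mathcal{U}}(X) \subseteq C_{k}(X)$ for the subcomplex generated by those singular simplices whose image lies entirely inside $A$ or entirely inside $B$. The strategy splits into two independent claims: (i) the quotient complex $C_{*}^{\mathcal{U}}(X)/C_{*}(A)$ is canonically isomorphic to $C_{*}(B)/C_{*}(A \cap B)$, and (ii) the inclusion $C_{*}^{\mathcal{U}}(X) \hookrightarrow C_{*}(X)$ is a chain homotopy equivalence. Composing the induced maps on $C_{*}(-)/C_{*}(A)$ will then yield the isomorphism $H_{k}(B, A \cap B) \cong H_{k}(X, A)$ induced by the inclusion of pairs.

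Step (i) is essentially formal. The obvious map $C_{k}(B) \to C_{k}^{\mathcal{U}}(X)/C_{k}(A)$ is surjective because any generator of $C_{k}^{\mathcal{U}}(X)$ lies in $A$ or in $B$, and those in $A$ vanish in the quotient; its kernel consists exactly of chains in $C_{k}(B) \cap C_{k}(A) = C_{k}(A \cap B)$. This descends to the claimed isomorphism of quotient chain complexes.

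Step (ii) is the real content and will be the main obstacle. First I would construct a subdivision operator $S: C_{*}(X) \to C_{*}(X)$ and a chain homotopy $T: C_{*}(X) \to C_{*+1}(X)$ with $\partial T + T \partial = 1 - S$, defined inductively on dimension via the cone construction on the standard simplex. Next, given any singular simplex $\sigma: \Delta^{k} \to X$, the preimages $\sigma^{-1}(\mathrm{int}\, A)$ and $\sigma^{-1}(\mathrm{int}\, B)$ form an open cover of the compact metric space $\Delta^{k}$, so a Lebesgue number argument gives an integer $n(\sigma)$ such that $S^{n(\sigma)} \sigma \in C_{k}^{\mathcal{U}}(X)$. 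The delicate part is organising these variable depths $n(\sigma)$ coherently across all simplices so that one obtains a genuine chain map $\rho: C_{*}(X) \to C_{*}^{\mathcal{U}}(X)$ together with a chain homotopy $D: C_{*}(X) \to C_{*+1}(X)$ satisfying $\partial D + D \partial = 1 - \iota \rho$ and $\rho \iota = 1$. The usual trick is to iterate $T$ on each simplex the required number of times and to manage the compatibility with faces by an inductive formula; keeping track of signs and boundary terms is where the bookkeeping becomes awkward.

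Finally, because subdivision of a simplex in $A$ remains in $A$, both $\rho$ and $D$ preserve the subcomplex $C_{*}(A)$ and therefore descend to the quotients. The induced inclusion $C_{*}^{\mathcal{U}}(X)/C_{*}(A) \hookrightarrow C_{*}(X)/C_{*}(A)$ is thus a chain homotopy equivalence, which combined with the isomorphism from step (i) yields $H_{k}(B, A \cap B) \cong H_{k}(X, A)$ naturally in the pair, completing the proof. The only genuinely difficult ingredient is the geometric subdivision argument; the rest is formal manipulation of quotient complexes.
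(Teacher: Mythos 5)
The paper does not prove this statement at all --- it is quoted as the classical excision theorem with a citation to Hatcher, and the paper's own contribution begins only afterwards (extending excision to filtrations and persistence modules). Your outline is precisely the standard proof from the cited source: the ``small simplices'' subcomplex $C_{*}^{\mathcal{U}}(X)$, barycentric subdivision $S$ with chain homotopy $T$, the Lebesgue-number argument giving the depths $n(\sigma)$, and the formal identification $C_{*}^{\mathcal{U}}(X)/C_{*}(A)\cong C_{*}(B)/C_{*}(A\cap B)$. The structure is correct and the composite map is indeed the one induced by the inclusion of pairs; the only caveat is that you explicitly defer the one genuinely technical step (assembling the variable subdivision depths into a chain map $\rho$ and homotopy $D$ compatible with faces), so as written this is a correct proof sketch rather than a complete proof --- but it is the same argument the paper is relying on by citation.
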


\subsection{Induced Filtrations}

We now show that an excision property can be extended to persistent homology. Let $(X,A, B)$ be a triple of topological spaces, with the property that $X = A^{\circ} \cup B^{\circ}$. From now on we assume that any subspaces of $X$ are given the subspace topology from $X$. Recall that this means that a set $S$ in say, $A \subset X$ is open in $A$ if and only if $S = A \cap S^{\prime}$ for some open set $S^{\prime}$ in $X$. 

Given a filtration of the larger space $X$, we can extend this to a filtration of a subspace in the following way. 

\begin{definition}
Let $A$ be a nonempty subspace of $X$, which is endowed with a filtration, $\{X_{i}\}_{i \in {P}}$. An \textbf{induced filtration} on $A$ is given by $\{A_{i}\}_{i \in {P}}$, where $A_{i}$ is obtained from $X_{i}$ via,
\begin{equation} A_{i} = A \cap X_{i}, \end{equation}
for each $i$.
\end{definition}

If the subspaces $A$ and $B$ are given filtrations of this type, then the condition,
$$X = A^{\circ} \cup B^{\circ},$$
extends to an analogous condition on the intermediate spaces in the filtration, as the following lemma shows. 

\begin{lemma} \label{interior}
Suppose that $A$ and $B$ are subsets of $X$ such that $A^{\circ} \cup B^{\circ} = X$. Let $\{X_{i}\}_{i \in {N}}$ be a filtration of $X$, and let $\{A_{i}\}_{i \in {N}}$ and $\{B_{i}\}_{i \in {N}}$ be induced filtrations on $A$ and $B$. Then, 
 \begin{equation} X_{i} = A_{i}^{\circ} \cup B_{i}^{\circ}, \end{equation}
for all $i$. 

\end{lemma}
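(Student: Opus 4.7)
The plan is to pin down the topology carefully first, since the interiors $A_i^{\circ}$ and $B_i^{\circ}$ are implicitly taken in $X_i$ with the subspace topology, whereas the hypothesis $A^{\circ} \cup B^{\circ} = X$ refers to interiors in $X$. To keep these straight, I would temporarily write $\mathrm{Int}_Y(S)$ for the interior of $S$ inside an ambient space $Y$, so the hypothesis reads $\mathrm{Int}_X(A) \cup \mathrm{Int}_X(B) = X$ and the goal is $X_i = \mathrm{Int}_{X_i}(A_i) \cup \mathrm{Int}_{X_i}(B_i)$.

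The key observation, and really the only content of the lemma, is that intersecting with $X_i$ sends $X$-open neighbourhoods in $A$ to $X_i$-open neighbourhoods in $A_i$. Concretely, I would prove the pointwise inclusion
\begin{equation*}
\mathrm{Int}_X(A) \cap X_i \;\subseteq\; \mathrm{Int}_{X_i}(A_i)
\end{equation*}
as follows: if $x \in \mathrm{Int}_X(A) \cap X_i$, pick an $X$-open set $U$ with $x \in U \subseteq A$; then $U \cap X_i$ is open in $X_i$ by the definition of the subspace topology, it contains $x$, and it sits inside $A \cap X_i = A_i$, so $x \in \mathrm{Int}_{X_i}(A_i)$. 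The analogous argument gives $\mathrm{Int}_X(B) \cap X_i \subseteq \mathrm{Int}_{X_i}(B_i)$.

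From here the lemma follows by a one-line chain of inclusions: using the hypothesis $\mathrm{Int}_X(A) \cup \mathrm{Int}_X(B) = X$, we get
\begin{equation*}
X_i \;=\; X \cap X_i \;=\; \big(\mathrm{Int}_X(A) \cup \mathrm{Int}_X(B)\big) \cap X_i \;\subseteq\; \mathrm{Int}_{X_i}(A_i) \cup \mathrm{Int}_{X_i}(B_i),
\end{equation*}
and the reverse inclusion is automatic since both interiors are subsets of $X_i$. I do not anticipate a real obstacle here; the only place one could slip is conflating $\mathrm{Int}_X(A) \cap X_i$ with $\mathrm{Int}_{X_i}(A_i)$ and trying to prove equality instead of inclusion. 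Equality need not hold in general (points of $A \cap X_i$ that are interior to $A_i$ in $X_i$ need not be interior to $A$ in $X$), but inclusion is all that is required, which is what makes the induced filtration behave well for the persistent excision argument to follow.
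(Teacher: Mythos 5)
Your proposal is correct and is essentially the paper's argument: both rest on the observation that $A^{\circ}_{X} \cap X_{i}$ is open in $X_{i}$ and contained in $A_{i}$, hence lies in $\mathrm{Int}_{X_{i}}(A_{i})$, after which the hypothesis $A^{\circ}\cup B^{\circ}=X$ gives the covering by intersecting with $X_{i}$. The only cosmetic differences are that the paper phrases this for a general subset $U \subseteq X$ and argues at the level of open sets rather than pointwise, while you additionally make explicit the (trivial) reverse inclusion that upgrades the covering to the stated equality.
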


We will prove a slightly more general statement, that if $U$ is any subset of a space $X$, which is covered by the interiors of two subspaces, then restricting $A$ and $B$ to $U$ provides a cover of $U$. That is, if $X = A^{\circ} \cup B^{\circ}$, then $$U \subseteq (U \cap A)^{\circ}_{U} \cup (U \cap B)^{\circ}_{U}.$$

Recall that if $A$ is a subspace of a topological space $X$, then the interior of $A$ in $X$ is defined as the largest open set in the topology on $X$ which is contained in $A$. If, however, we have that $U$ is also a subset of $X$ such that $A \subset U \subset X$, we could wish to consider the interior of $A$ inside $X$, or the interior of $A$ inside $U$, that is, the largest open set in the topology on $U$, which is contained in $A$. We will refer to the first notion as $A^{\circ}_{X}$ and the second as $A^{\circ}_{U}$. 

\begin{proof}
By assumption, $X = A_{X}^{\circ} \cup B_{X}^{\circ}$. 

Trivially, $U = U \cap X$, as $U \subset X$, so 
\begin{equation} U = U \cap X = U \cap (A_{X}^{\circ} \cup B_{X}^{\circ}) = (U \cap A_{X}^{\circ}) \cup (U \cap B_{X}^{\circ}).\end{equation}

Considering $(U \cap A_{X}^{\circ})$, this set is open in the subspace topology for $U$, as by definition it is the intersection of $U$ with an open set in $X$. As it is open in $U$, it equals its own interior, when the interior is taken inside $U$, so 
\begin{equation}(U \cap A_{X}^{\circ}) = (U \cap A_{X}^{\circ})_{U}^{\circ}  \subseteq (U \cap A)_{U}^{\circ},\end{equation}
similarly for $B$, we have:
\begin{equation} (U \cap B_{X}^{\circ}) = (U \cap B^{\circ}_{X})_{U}^{\circ} \subseteq (U \cap B)_{U}^{\circ}. \end{equation}
Hence $U = (U \cap A_{X}^{\circ}) \cup (U \cap B_{X}^{\circ}) \subseteq  (U \cap A)_{U}^{\circ} \cup  (U \cap B)_{U}^{\circ}$.
\end{proof}

Replacing $U$ with the spaces $X_{i}$ from the filtration $\{X_{i}\}_{i \in {N}}$, so that $U \cap A = X_{i} \cap A = A_{i}$, and $U \cap B = X_{i} \cap B = B_{i}$, we see that the condition $$X = A^\circ \cup B^{\circ},$$
means that $$X_{i} \subseteq A_{i}^{\circ} \cup B_{i}^{\circ},$$
for each triple $(X_{i}, A_{i}, B_{i})$.

\subsection{Persistent Excision Theorem}

\begin{theorem} \label{theorem}
Let $A, B$ be nonempty subspaces of some triangulable topological space $X$ such that $X = A^{\circ} \cup B^{\circ}$. Let
$\{X_{i}\}_{i \in {N}}$ be a filtration of $X$, and suppose that  $ \{A_{i}\}_{i \in {N}}$ and $\{B_{i}\}_{i \in {N}}$ are filtrations on $A$ and $B$ such that $X_{i} = A_{i}^{\circ} \cup B_{i}^{\circ}$. Then there is an isomorphism of persistence modules,
\begin{equation} 
\mathcal{H}_{k}(X,A) \cong \mathcal{H}_{k}(B, A \cap B),
\end{equation}
 and an isomorphism of persistent homology groups, 
 \begin{equation}H_{k}^{i,j}(X, A) \cong H_{k}^{i,j}(B, A \cap B),\end{equation}
for any $k \in \mathbb{N}$, and any $i <j$.
\end{theorem}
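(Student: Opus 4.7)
The plan is to build the desired isomorphism one filtration stage at a time from the classical excision theorem quoted as Theorem~1, and then promote the pointwise isomorphisms to an isomorphism of persistence modules by checking naturality with respect to the structure maps.

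First, fix an index $i$. By hypothesis $X_{i} = A_{i}^{\circ} \cup B_{i}^{\circ}$, so Theorem~1 applies to the triple $(X_{i}, A_{i}, B_{i})$ and yields an isomorphism
\begin{equation*}
e_{i}: H_{k}(B_{i}, A_{i} \cap B_{i}) \xrightarrow{\;\cong\;} H_{k}(X_{i}, A_{i}),
\end{equation*}
induced by the inclusion of pairs $(B_{i}, A_{i} \cap B_{i}) \hookrightarrow (X_{i}, A_{i})$. Next, for any $i < j$ the filtration inclusions $B_{i} \hookrightarrow B_{j}$, $A_{i}\cap B_{i} \hookrightarrow A_{j}\cap B_{j}$, $X_{i} \hookrightarrow X_{j}$ and $A_{i} \hookrightarrow A_{j}$ fit into a commutative square of pairs, and applying the homology functor yields the diagram
\begin{equation*}
\begin{tikzcd}
H_{k}(B_{i}, A_{i} \cap B_{i}) \arrow[r, "e_{i}"] \arrow[d] & H_{k}(X_{i}, A_{i}) \arrow[d] \\
H_{k}(B_{j}, A_{j} \cap B_{j}) \arrow[r, "e_{j}"] & H_{k}(X_{j}, A_{j})
\end{tikzcd}
\end{equation*}
Commutativity of this square is the key step, and is an immediate consequence of the functoriality of relative homology, since both composites equal the map on $H_{k}$ induced by the composed inclusion of pairs $(B_{i}, A_{i}\cap B_{i}) \hookrightarrow (X_{j}, A_{j})$. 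So the $e_{i}$ intertwine the two persistence module structures, and assemble into a morphism of persistence modules $\mathcal{H}_{k}(B, A \cap B) \to \mathcal{H}_{k}(X, A)$ which is a levelwise isomorphism, hence an isomorphism of persistence modules. This establishes the first claim.

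For the second claim, the same commutative square shows that $e_{j}$ carries the image of the upper-left vertical map onto the image of the upper-right one, and that $e_{i}^{-1}$ does the reverse. Restricting $e_{j}$ to $\mathrm{im}\bigl(H_{k}(B_{i}, A_{i}\cap B_{i}) \to H_{k}(B_{j}, A_{j}\cap B_{j})\bigr) = H_{k}^{i,j}(B, A\cap B)$ therefore gives an isomorphism onto $H_{k}^{i,j}(X,A)$.

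The main obstacle is not technical but conceptual: the theorem is doing little more than bundling together the classical excision isomorphisms, provided one knows that these isomorphisms are natural under inclusion of pairs. Since Theorem~1's isomorphism is explicitly induced by the inclusion of pairs, this naturality comes for free from the functoriality of $H_{k}(-,-)$; the only genuine input is the hypothesis $X_{i} = A_{i}^{\circ} \cup B_{i}^{\circ}$ (ensured, for induced filtrations, by Lemma~\ref{interior}) which guarantees that classical excision may be invoked at every stage.
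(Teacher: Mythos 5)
Your proposal is correct and follows essentially the same route as the paper: classical excision applied levelwise (using the hypothesis $X_{i} = A_{i}^{\circ} \cup B_{i}^{\circ}$), commutativity of the comparison squares via functoriality of relative homology, and then restriction of the levelwise isomorphisms to the images of the structure maps to obtain the isomorphism of persistent homology groups. The only difference is cosmetic: where the paper verifies the last step by an explicit injectivity/surjectivity diagram chase, you deduce it directly from the commutative square and the bijectivity of $e_{i}$ and $e_{j}$, which is a slicker packaging of the same argument.
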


\begin{proof}

Filtrations of $X,A,B$ and $A \cap B$ as above give rise to sequences of inclusions of pairs:
\begin{equation} \label{seq1} (X_{0}, A_{0}) \hookrightarrow (X_{1}, A_{1}) \hookrightarrow \dots \hookrightarrow (X, A),
\end{equation}
\begin{equation} \label{seq2} (B_{0}, (A \cap B)_{0}) \hookrightarrow (B_{1}, (A \cap B)_{1}) \hookrightarrow \dots \hookrightarrow (B, (A \cap B)). \end{equation}

Consider the following diagram of vector spaces, where each map is induced by the inclusion of pairs:
\begin{figure}[H] \centering
\begin{equation}
\begin{tikzcd}
{H_{k}(B_{0}, (A \cap B)_{0}}) \arrow[r] \arrow[d] & {H_{k}(B_{1}, (A \cap B)_{1}}) \arrow[r] \arrow[d] & 
{\cdots} \arrow[r] & {H_{k}(B, (A \cap B))} \arrow[d] \\
{H_{k}(X_{0}, A_{0})} \arrow[r] & {H_{k}(X_{1}, A_{1})} \arrow[r] & {\cdots} \arrow[r] & {H_{k}(X, A)}
\end{tikzcd} \end{equation}
\end{figure}
As $X_{i} \subseteq A_{i}^{\circ} \cup B_{i}^{\circ}$ for each $i$, every vertical map in the diagram above is an isomorphism by the usual excision theorem. 

Moreover, each square in the diagram commutes, as the squares:
\begin{equation}
\begin{tikzcd}
{(B_{i}, (A \cap B)_{i})} \arrow[hookrightarrow]{r} \arrow[hookrightarrow]{d}   & {(B_{i+1}, (A \cap B)_{i+1})} \arrow[hookrightarrow]{d} \\
{(X_{i}, A_{i})} \arrow[hookrightarrow]{r} & {(X_{i+1}, A_{i+1})}
\end{tikzcd}
\end{equation}
commute at the level of spaces. Applying homology, which is functorial, we see that commutativity must be preserved in each square:
\begin{equation}
\label{diagram}
\begin{tikzcd}
{H_{k}(B_{i}, (A \cap B)_{i})} \arrow[r] \arrow[d]   & {H_{k}(B_{i+1}, (A \cap B)_{i+1})} \arrow[d] \\
{H_{k}(X_{i}, A_{i})} \arrow[r] & {H_{k}(X_{i+1}, A_{i+1})}
\end{tikzcd}
\end{equation}

Hence we have an isomorphism on the level of persistence modules, $\mathcal{H}_{k}(X,A)~\cong~\mathcal{H}_{k}(B,~A~\cap~B).$ 

For the isomorphism of persistent homology groups, we can show that each vertical isomorphism in (\ref{diagram}) is preserved when we restrict to the images of the horizontal maps. Let us consider the general picture, where we have a commutative diagram of vector spaces $C, D, E$ and $F$,
\begin{equation} \label{abcd}
\begin{tikzcd}
{C} \arrow[r, "f"] \arrow[d, "i"'] & {D} \arrow[d, "j"] \\
{E} \arrow[r, "g"'] & {F}
\end{tikzcd}
\end{equation}
Let $i$ and $j$ be isomorphisms. Then we can show that $j$ restricts to an isomorphism $\bar{j}:\text{Im}(f) \to \text{Im}(g)$.

The proof involves a simple diagram chase. Let $y \in \text{Im}(f)$. Then there is some $c \in C$ such that $y = f(c)$. Define a map $s: \text{Im}(f) \to \text{Im}(g)$ by $s(y) = g(i(c))$. We show that $s$ is both injective and surjective. 

\begin{enumerate}
\item \textbf{Injectivity:} Suppose $s(y) = s(y^{\prime})$ for $y, y^{\prime} \in \text{Im}(f)$. By definition of $s$,
$g(i(f^{-1}(y))) = g(i(f^{-1}(y^{\prime})))$. Let $c = f^{-1}(y)$ and $c^{\prime} = f^{-1}(y^{\prime})$. 
Then $$g(i(c)) = g(i(c^{\prime})).$$
The square (\ref{abcd}) commutes, so $$j(f(c)) = j(f(c^{\prime})).$$
But $j$ is an isomorphism, and so $y = f(c) = f(c^{\prime}) = y^{\prime}$.
\item \textbf{Surjectivity:} Let $x \in \text{Im}(g)$. We want to find a $y \in \text{Im}(f)$ such that $s(y) = x$. 

First, we have that there as $x \in \text{Im}(g)$, there is some $z \in E$ such that $g(z) = x$. We also have that $i$ is an isomorphism, so there is an $c \in C$ such that $i^{-1}(z) = c$. Let $y = f(c)$. Then $s(y) = x$. 

Replacing the vector spaces $C, D, E$ and $F$ with $H_{k}(B_{i}, (A \cap B)_{i})$, $H_{k}(B_{j}, (A \cap B)_{j})$, $H_{k}(X_{i}, A_{i})$ and $H_{k}(X_{j}, A_{j})$, respectively, we see that the isomorphism
$$H_{k}(B_{j}, (A \cap B)_{j}) \to H_{k}(X_{j}, A_{j}),$$
descends to an isomorphism on the images of the maps
\begin{equation}H_{k}(B_{i}, (A \cap B)_{i}) \to H_{k}(B_{j}, (A \cap B)_{j}),\end{equation}
\begin{equation}H_{k}(X_{i}, A_{i}) \to H_{k}(X_{j}, A_{j}).\end{equation}
In this case, these images are the persistent homology groups, and so we have an isomorphism,
\begin{equation} H_{k}^{i,j}(X, A) \cong H_{k}^{i,j}(B, A \cap B), \end{equation}
for any $i < j$ and for any $k \in \mathbb{N}$.
\end{enumerate}
\end{proof}

An example of a filtration on a topological space which has the required property that $X_{i} = A_{i}^{\circ} \cup B_{i}^{\circ}$ - and hence gives rise to the excision property in persistent homology - is a filtration where each subset $X_{i}$ is a sublevel set of some real-valued function. 

\begin{example} For a topological space $X$, let $\{\epsilon_{i} | \epsilon_{i} \in \mathbb{R} \}$ be a finite set of regular values of some real-valued function $f:X \to \mathbb{R}$, which interleave any critical values of $f$. We can define a sublevel set filtration of $X$, where each subspace in the filtration is given by $X_{i} = \{x \in X \big| f(x) \leq \epsilon_{i}\}$. 

Suppose $A$ and $B$ are subspaces of $X$ such that $X = A^{\circ} \cup B^{\circ}$. We can restrict the filtration of $X$ to a filtration of the subspaces $A$ and $B$ in a natural way, by restricting $f$ to both $A$ and $B$. Hence we can define,
$$A_{i} = \big\{a \in A \hspace{1mm} \big| \hspace{1mm} f_{A}(a) \leq \epsilon_{i} \big\},$$
where $f_{A}$ denotes the restriction of $f$ to $A$. Similarly, if we denote the restriction of $f$ to $B$ by $f_{B}$, we define,
$$B_{i} = \big\{b \in B \hspace{1mm} \big| \hspace{1mm} f_{B}(b) \leq \epsilon_{i}\big\}.$$

Clearly, $$A_{i} = \big\{x \in X\hspace{1mm} \big| \hspace{1mm} f(x) \leq \epsilon_{i} \big\} \cap A = X_{i} \cap A,$$
and similarly, $B_{i} = X_{i} \cap B$.

The same is true if we consider a filtration by superlevel sets,
\begin{equation} X^{i} = \big\{ x \in X \hspace{1mm} \big| \hspace{1mm} f(x) \geq \epsilon_{i} \big\}.\end{equation}
 \end{example}
 
 This shows that a sublevel set filtration really does have the necessary properties to induce an excision property in persistent homology, as well as the Mayer-Vietoris sequence described above, and in \cite{MV1}. Moreover, Lemma \ref{interior} gives a large class of further examples of filtrations with the same properties, extending the results of \cite{MV1} to a much larger class of filtrations than just sublevel set filtrations.

\begin{example}
 Let $$X_{0} \subseteq X_{1} \subseteq \cdots \subseteq X,$$
be a filtration of $X$ by sublevel sets of some $f:X \to \mathbb{R}$. Then the persistence module of pairs,
\begin{equation}H_{k}(X, X_{1}) \to H_{k}(X, X_{2}) \to \cdots \to H_{k}(X, X_{n}),\end{equation}
is isomorphic to the module, 
\begin{equation} H_{k}(X^{1}, X_{=1}) \to H_{k}(X^{2}, X_{=2}) \to \cdots \to H_{k}(X^{n}, X_{=n}), \end{equation}
where $X^{i}$ is the superlevel set \begin{equation} X^{i} = \big\{ x \in X \hspace{1mm} \big| \hspace{1mm} f(x) \geq \epsilon_{i}\big\},\end{equation} and $X_{=i}$ is the level set,
 \begin{equation}X_{=i} = \big\{x \in X \hspace{1mm}\big| \hspace{1mm} f(x) = \epsilon_{i} \big\}.\end{equation}

\end{example}

\subsection{Excision and the Mayer-Vietoris Sequence}

It is well-known \cite{hatcher} that a Mayer-Vietoris sequence can be obtained using the long exact sequence for a pair together with the excision theorem. We here demonstrate that this is still the case in the persistent setting. In our case, we consider the commutative diagram, 

\begin{equation} \label{diagram}
\begin{tikzcd}
    {\cdots} \ar[r] &  H_{k}^{i,j}(B) \ar[r, "\kappa_{B}"] \ar[d] & H_{k}^{i,j}(B , A \cap B) \ar[r, "\partial_{B}"] \ar[d, equal] & H_{k-1}^{i,j}(A \cap B) \ar[r, "\beta"] \ar[d, "\alpha"] & H^{i,j}_{k-1}(B) \ar[r] \ar[d, "\iota_{B}"] & {\cdots} \\
    {\cdots} \ar[r] &  H_{k}^{i,j}(X) \ar[r, "\kappa_{X}"]  & H_{k}^{i,j}(X , A) \ar[r, "\partial_{X}"] & H_{k-1}^{i,j}(A) \ar[r, "\iota_{A}"]  & H^{i,j}_{k-1}(X) \ar[r] & {\cdots}
\end{tikzcd}
\end{equation}

Each row in the diagram is the sequence for a pair as in (\ref{lsph}) - the top row is the sequence for the pair $(B, A \cap B)$, and the bottom row is the sequence for the pair $(X,A)$. The maps $\alpha_{k-1}$ and $\beta_{k-1}$ are precisely those seen in the sequence (\ref{mvpersist}), which are induced by the inclusions of $A \cap B$ into $A$ and $B$, respectively. As above, $\iota_{A}$ and $\iota_{B}$ denote the maps induced by the respective inclusions of $A$ and $B$ into $X$, $\kappa_{B}$ and $\kappa_{X}$ are induced by the quotient maps, $B \rightarrow B/ A \cap B$ and $X \rightarrow X/ A$, and $\partial_{B}$ and $\partial_{X}$ are the relative boundary maps, as described above.

The excision theorem for persistent homology tells us that the vertical map from $H_{k}^{i,j}(B , A \cap B)$ to $H_{k}^{i,j}(X , A)$ is an isomorphism. The fact that, in (\ref{diagram}), the image of each incoming map is contained in the kernel of each outgoing map descends to an identical property of the sequence:
\begin{equation}
    \cdots \to H_{k+1}^{i,j}(X) \xrightarrow{\omega} H_{k}^{i,j}(A \cap B) \xrightarrow{(\alpha, \beta)} H_{k}^{i,j}(A) \oplus H^{i,j}_{p}(B) \xrightarrow{\gamma - \delta} H_{k}(X) \to \cdots
\end{equation}
which is precisely the Mayer-Vietoris sequence in persistent homology groups seen in \cite{MV1}.

\section*{Acknowledgements}

This paper forms part of my PhD thesis. I would also like to thank Jacek Brodzki, Mariam Pirashvili, Matthew Burfitt, Ingrid Membrillo Solis and Donya Rahmani for interesting discussions related to this and accompanying work.

\bibliographystyle{amsplain}

\end{document}